\newtheorem{thm}{Theorem}[section]
\newtheorem{lem}[thm]{Lemma}
\newtheorem{prop}[thm]{Proposition}
\newtheorem{prob}[thm]{Problem}
\theoremstyle{definition}
\newtheorem{defin}[thm]{Definition}
\newtheorem{rem}[thm]{Remark}
\numberwithin{equation}{section}
\newcommand{\eps}{\varepsilon}
\begin{document}

%%%%% To ease editing, for IMPAN journals add:

\baselineskip=17pt

%%%%%%%%%%%%%%%%

\title{Universal stability  of Banach spaces for $\eps$-isometries}

\author{Lixin Cheng$^\dag$\\
School of Mathematical Sciences\\
Xiamen University\\
Xiamen 361005, China\\
E-mail: lxcheng@xmu.edu.cn
\and
Duanxu Dai$^\ddag$\\
School of Mathematical Sciences\\
Xiamen University\\
Xiamen 361005, China\\
E-mail: dduanxu@163.com
\and
Yunbai Dong$^\S$\\
School of Mathematics and Computer\\
Wuhan Textile University\\
Wuhan 430073, China\\
E-mail: baiyunmu301@126.com
\and
Yu Zhou\\
School of Fundamental Studies\\
Shanghai University of Engineering Science\\
Shanghai 201620, China\\
E-mail: Roczhou\_fly@126.com}

\date{}

\maketitle

%% Classification and key words; note that the 2010 classification is used:

\renewcommand{\thefootnote}{}

\footnote{2010 \emph{Mathematics Subject Classification}: Primary 46B04, 46B20, 47A58; Secondary 26E25, 46A20, 46A24.}

\footnote{\emph{Key words and phrases}: $\eps$-isometry, stability, injective space, Banach space.}

\footnote{$^\dag$ Support in partial
by NSFC, grants 11071201 \& 11371296.}
\footnote{$^\ddag$ Support in partial  by a fund of China Scholarship Council and Texas A\&M University.}
\footnote{$^\S$ Support in partial
by NSFC, grant 11201353.}

\renewcommand{\thefootnote}{\arabic{footnote}}
\setcounter{footnote}{0}

%%%%%%%%

\begin{abstract}
Let $X$, $Y$ be two real Banach spaces and $\eps>0$.
A standard  $\eps$-isometry $f:X\rightarrow Y$ is said to be $(\alpha,\gamma)$-stable (with respect to $T:L(f)\equiv\overline{{\rm span}}f(X)\rightarrow X$ for some $\alpha, \gamma>0$) if $T$ is a linear operator with $\|T\|\leq\alpha$ so that $Tf-Id$ is uniformly bounded by $\gamma\eps$ on $X$.
The pair $(X,Y)$  is said to be stable  if
   every standard $\eps$-isometry $f:X\rightarrow Y$ is $(\alpha,\gamma)$-stable for some $\alpha,\gamma>0$. $X (Y)$ is said to be universally left (right)-stable, if $(X,Y)$ is always stable for every $Y (X)$. In this paper, we show  that  universal right-stability spaces are just Hilbert spaces; every injective space is universally left-stable;   a Banach space $X$ isomorphic to a subspace of $\ell_\infty$ is universally left-stable if and only if it is isomorphic to $\ell_\infty$; and that  a separable space $X$ satisfies the condition that $(X,Y)$ is  left-stable for every separable $Y$ if and only if it is  isomorphic to $c_0$.
\end{abstract}

\section{Introduction}

The study of properties of isometries and its generalizations between Banach spaces has continued for 80 years since Mazur and Ulam 1932's celebrated results \cite{ma}: Every surjective isometry between two Banach spaces $X$ and $Y$ is necessarily affine. While a simple example $f:\mathbb R\rightarrow\ell^2_\infty$ defined by $f(t)=(t,\sin t)$ shows that it is not always possible if the mapping is not surjective. In 1968, Figiel \cite{figiel} showed the following remarkable result: For every standard isometry $f:X\rightarrow Y$  there is a linear operator $T:L(f)\rightarrow X$ with $\|T\|=1$ so that $Tf=Id$ on $X$, where $L(f)$ is the closure of span$f(X)$ in $Y$ (see, also, \cite{ben} and \cite{du}). In 2003, Godefroy and Kalton \cite{gode} studied the relationship between isometries and linear isometries and resolved a long-standing problem: Whether existence of an isometry $f:X\rightarrow Y$ implies the existence of a linear isometry $U:X\rightarrow Y$?

\begin{defin} Let $X,Y$ be two Banach spaces, $\eps\geq0$, and let $f:X\rightarrow Y$ be a mapping.

(1) $f$ is said to be an $\eps$-isometry if
\begin{align} |\|f(x)-f(y)\|-\|x-y\||\leq\eps\;\;{\rm for\; all}\; x,y\in X.
\end{align}
In particular, a $0$-isometry $f$ is simply called an isometry.

(2) We say  an $\eps$-isometry $f$ is standard if $f(0)=0$.

(3) A standard $\eps$-isometry is $(\alpha,\gamma)$-stable if there exist $\alpha, \gamma>0$ and a bounded linear operator $T:L(f)\rightarrow X$ with $\|T\|\leq\alpha$ such that
\begin{align}\label{E:1.2}\|Tf(x)-x\|\leq\gamma\eps,\;\;{\rm for\;all\;}x\in X.
\end{align}
In this case, we also simply say $f$ is stable, if no confusion arises.

(4) A pair $(X,Y)$ of Banach spaces $X$ and $Y$ is said to be stable  if
   every standard $\eps$-isometry $f:X\rightarrow Y$ is $(\alpha,\gamma)$-stable for some $\alpha,\gamma>0$.

(5) A pair $(X,Y)$ of Banach spaces $X$ and $Y$  is called  $(\alpha,\gamma)$-stable for some $\alpha,\gamma>0$ if every standard $\eps$-isometry $f:X\rightarrow Y$ is $(\alpha,\gamma)$-stable.
\end{defin}

In 1945,  Hyers and Ulam  proposed the following question\cite{hy} (see, also \cite{om}): Whether
for every every pair of Banach spaces $(X,Y)$ there is $\gamma>0$ such that for every standard surjective $\eps$-isometry $f:X\rightarrow Y$ there exists a surjective linear isometry $U:X\rightarrow Y$
 so that
\begin{align}\label{E:1.3}\|f(x)-Ux\|\leq\gamma\varepsilon,~~~~{\rm for~~all}\; x\in X .\end{align}

After many years efforts of a number of mathematicians (see, for instance, \cite{ge}, \cite{gru}, \cite{hy}, and  \cite{om}),  Omladi\v{c} and \v{S}emrl \cite{om} finally achieved the sharp estimate $\gamma=2$ in \eqref{E:1.3}.

The study of non-surjective  $\eps$-isometries has also brought to mathematicians' attention (see, for instance, \cite{bao}, \cite{cheng}, \cite{cheng1},  \cite{dil}, \cite{om}, \cite{qian},  \cite{sm} and \cite{ta}).
Qian\cite{qian} first proposed  the following problem in 1995.

\begin{prob} Whether for every pair $(X,Y)$ of Banach spaces $X$ and $Y$ there exists  $\gamma>0$ such that every standard
$\varepsilon$-isometry $f:X\rightarrow Y$ is $(\alpha,\gamma)$-stable for some $\alpha>0$.
\end{prob}
\noindent
Then he showed that the answer is affirmative if both $X$ and $Y$ are
$L_p$ spaces. \v{S}emrl and V\"{a}is\"{a}l\"{a} \cite{sm} further presented a sharp estimate of \eqref{E:1.2}
with $\gamma=2$ if both $X$ and $Y$ are $L^p$ spaces for $1<p<\infty$.
However,  Qian  \cite{qian} presented a counterexample showing that if a separable Banach space $Y$ contains a uncomplemented closed subspace $X$ then for every $\eps>0$ there is a standard $\eps$-isometry $f:X\rightarrow Y$ which is not stable.  Cheng, Dong and Zhang \cite{cheng} showed the following weak stability version.
  \begin{thm}[Cheng-Dong-Zhang]\label{T:1.3}
 Let $X$ and $Y$ be Banach spaces,
  and let $f:X\rightarrow Y$ be a standard $\eps$-isometry for
 some $\eps\geq 0$. Then for every $x^*\in X^*$, there exists $\phi\in Y^*$ with $\|\phi\|=\|x^*\|\equiv r$ such that
 \begin{align} |\langle\phi,f(x)\rangle-\langle x^*,x\rangle|\leq4\eps r, \; for\;all\;x\in X.\end{align}
 \end{thm}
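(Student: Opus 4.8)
The plan is to produce $\phi$ by reducing its existence to a single Hahn--Banach interpolation inequality, and then to establish that inequality as a quantitative, $\eps$-perturbed form of Figiel's theorem. First I would dispose of the trivial case $x^*=0$ (take $\phi=0$) and, by homogeneity, normalise $\|x^*\|=r=1$: a functional $\phi$ with $\|\phi\|\le 1$ and $|\langle\phi,f(x)\rangle-\langle x^*,x\rangle|\le 4\eps$ rescales by $r$ to the general statement. The only elementary input is that, since $\|x^*\|=1$, for all $x,u\in X$ one has $\langle x^*,x-u\rangle\le\|x-u\|\le\|f(x)-f(u)\|+\eps$. In particular the $1$-Lipschitz function $g(y):=\inf_{u\in X}\{\langle x^*,u\rangle+\|y-f(u)\|\}$ satisfies $g\le\|\cdot\|$ and $\langle x^*,x\rangle-\eps\le g(f(x))\le\langle x^*,x\rangle$, recording the values we want $\phi$ to take on $f(X)$ up to $\eps$; its twin $\tilde g(y):=\sup_{u}\{\langle x^*,u\rangle-\|y-f(u)\|\}$ obeys $\tilde g\le g+\eps$ and $\langle x^*,x\rangle\le\tilde g(f(x))$, which pins the value from the other side.

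Next I would invoke the standard functional-interpolation (Helly/Hahn--Banach) criterion: there is $\phi\in Y^*$ with $\|\phi\|\le 1$ and $|\langle\phi,f(x)\rangle-\langle x^*,x\rangle|\le 4\eps$ for every $x$ \emph{if and only if}, for all finite families $x_1,\dots,x_n\in X$ and scalars $\lambda_1,\dots,\lambda_n\in\R$,
\[\Big|\sum_{i=1}^{n}\lambda_i\langle x^*,x_i\rangle\Big|\le\Big\|\sum_{i=1}^{n}\lambda_i f(x_i)\Big\|+4\eps\sum_{i=1}^{n}|\lambda_i|;\]
call this inequality $(\heartsuit)$. Necessity is immediate; sufficiency is the weak$^*$-compactness of the unit ball of $Y^*$ together with the finite intersection property for the slabs $\{\psi:|\langle\psi,f(x_i)\rangle-\langle x^*,x_i\rangle|\le 4\eps\}$. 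Thus the whole theorem becomes equivalent to $(\heartsuit)$, and proving $(\heartsuit)$ is the entire task.

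To prove $(\heartsuit)$ I would first absorb the sign pattern: with $P=\{i:\lambda_i>0\}$, $N=\{i:\lambda_i<0\}$ and $a_i=|\lambda_i|$, split off the excess weight $|\sum_P a_i-\sum_N a_j|$ against the node $f(0)=0$ (controlled cheaply by the single-point bound $\langle x^*,x\rangle\le\|f(x)\|+\eps$) to reduce to the balanced case $\sum_P a_i=\sum_N a_j=S$. Since $\langle x^*,\cdot\rangle\le\|\cdot\|$, it then suffices to show that $f$ nearly preserves the norm of balanced weighted differences, namely
\[\Big\|\tfrac1S\sum_P a_i x_i-\tfrac1S\sum_N a_j x_j\Big\|\le\Big\|\tfrac1S\sum_P a_i f(x_i)-\tfrac1S\sum_N a_j f(x_j)\Big\|+8\eps.\]
When $\eps=0$ this is exactly the functional content of Figiel's theorem, since the norm-one operator $T$ with $Tf=\mathrm{Id}$ gives $\|\sum\lambda_i x_i\|=\|T\sum\lambda_i f(x_i)\|\le\|\sum\lambda_i f(x_i)\|$. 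One natural way to obtain the skeleton is to manufacture a genuine isometry from $f$: passing to an ultrapower $(Y)_{\mathcal U}$, the map $F(x)=[(f(nx)/n)_n]$ is a standard isometry $X\to(Y)_{\mathcal U}$, so Figiel's theorem furnishes $S$ with $\|S\|=1$ and $SF=\mathrm{Id}$, giving the displayed inequality with $\eps=0$ for the rescaled combinations.

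The hard part will be the transfer back to scale one, and I expect this to be the main obstacle. The ultrapower sees $f$ only asymptotically, through $f(nx)/n$, whereas $(\heartsuit)$ compares the actual values $f(x_i)$; because $f$ is genuinely nonlinear these differ by amounts not controlled by any single distance, so the naive substitution is lost. Nor can one replace the cancelled norm $\|\sum\lambda_i f(x_i)\|$ by pairwise data: estimating it through a matching or a transport plan together with $\|x_i-x_j\|\le\|f(x_i)-f(x_j)\|+\eps$ over-counts the triangle-inequality defect, which is not $O(\eps)$. The crux is therefore to linearise $f$ up to $O(\eps)$ \emph{directly at finite scale}, pushing the $\eps$-errors of the distance inequalities through Figiel's selection argument rather than through a limit; this is exactly where the constant degrades from $0$ to $4$. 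Once $(\heartsuit)$ is secured, the criterion yields $\phi$ with $\|\phi\|\le 1$; to upgrade this to $\|\phi\|=1$ I would test on $x=tx_0$ with $\|x_0\|=1$, $\langle x^*,x_0\rangle\to 1$ and $t\to\infty$, so that $\langle\phi,f(tx_0)\rangle/\|f(tx_0)\|\ge(t\langle x^*,x_0\rangle-4\eps)/(t+\eps)\to\langle x^*,x_0\rangle$ forces $\|\phi\|\ge 1$, and finally rescale by $r$ to recover the stated conclusion.
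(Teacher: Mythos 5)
First, a point of order: the paper you are working from does not prove Theorem \ref{T:1.3} at all --- it is quoted as a known result from \cite{cheng} --- so your argument has to stand entirely on its own. It does not, because its central inequality is never established. Your reduction of the theorem to the finite interpolation inequality $(\heartsuit)$ is correct (necessity is the computation you give; sufficiency follows from Sion's minimax applied to $\min_{\psi\in B_{Y^*}}\max_{i}|\langle\psi,f(x_i)\rangle-\langle x^*,x_i\rangle|$ together with weak$^*$-compactness and the finite intersection property), but it is also purely formal and reversible: $(\heartsuit)$ is \emph{equivalent} to the conclusion of the theorem, so the dualization transfers all of the analytic content into proving $(\heartsuit)$, and that is precisely the step you leave open. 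The ultrapower map $F(x)=[(f(nx)/n)_n]$ is indeed a genuine standard isometry to which Figiel's theorem applies, but, as you yourself say, it only controls the asymptotic rescalings $f(nx)/n$ and not the finite-scale values $f(x_i)$ that appear in $(\heartsuit)$; you then declare that the transfer back to scale one is ``the main obstacle'' and ``the crux'' and offer no argument for it, only the assertion that ``this is exactly where the constant degrades from $0$ to $4$.'' That crux \emph{is} the Cheng--Dong--Zhang theorem: what you have written is an equivalent restatement of the result, an explanation of why two natural attacks (ultrapower plus Figiel, and pairwise matching estimates) fail, and then a stop. Note also that the balanced norm inequality you reduce to is \emph{stronger} than the functional inequality $(\heartsuit)$ (it has $\|\cdot\|$ on the left rather than $\langle x^*,\cdot\rangle$); its truth can be deduced \emph{from} the theorem, but you have given no independent route to it.

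Two secondary problems would remain even if the crux were filled. The balancing step is lossy: if the positive and negative masses are $A\geq B$, adding the node $f(0)=0$ with weight $A-B$ raises the total $\ell_1$-mass from $A+B$ to $2A$, so a balanced inequality with error $8\eps S$ (where $S=A$) only yields $(\heartsuit)$ with constant $8$ rather than $4$ in the unbalanced extreme --- e.g.\ when all $\lambda_i>0$, your scheme proves at best $\sum\lambda_i\langle x^*,x_i\rangle\leq\|\sum\lambda_i f(x_i)\|+8\eps\sum\lambda_i$, and hence $|\langle\phi,f(x)\rangle-\langle x^*,x\rangle|\leq 8\eps r$, not the stated $4\eps r$. (The all-positive case is itself already nontrivial: the single-point bound $\langle x^*,x\rangle\leq\|f(x)\|+\eps$ does not sum, since the triangle inequality runs the wrong way against $\|\sum\lambda_i f(x_i)\|$.) For comparison, the published proof in \cite{cheng} does not pass through finite linear-combination inequalities at all: it produces $\phi$ directly by a separation/weak$^*$-compactness argument exploiting the $\eps$-isometry estimate along rays $tx$, $t\to\infty$, which is how the constant $4$ is actually obtained. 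Your closing step (upgrading $\|\phi\|\leq 1$ to $\|\phi\|=1$ by testing on $f(tx_0)$ and letting $t\to\infty$) is fine, but it is the only part of the proposal after the setup that is actually proved.
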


For study of the stability of $\eps$-isometries of Banach spaces, the following two questions are very natural.
\begin{prob}
Is there a  characterization for the class of Banach spaces $\mathcal{X}$ satisfying given any $X\in \mathcal{X}$ and  Banach space $Y$, the pair $(X,Y)$ is ($(\alpha,\gamma)$-, resp.) stable?

\end{prob}

Every space $X$ in this class is said to be a universal ($(\alpha,\gamma)$-, resp.) left-stability space.
\begin{prob}
 Can we characterize the class of Banach spaces $ \mathcal{Y}$, such that given any $Y\in \mathcal{Y}$ and Banach space $X$, the pair $(X,Y)$ is ($(\alpha,\gamma)$-, resp.) stable ?

\end{prob}
Every space $Y$ in this class is called a universal ($(\alpha,\gamma)$-, resp.) right-stability space.\\

In this paper, we study  universal stability and universal right-stability of Banach spaces. As a result, with the aim of  Qian's counterexample and Theorem \ref{T:1.3}, incorporating of  Lindenstrauss-Tzafriri's characterization of Hilbert spaces \cite{lin}, we show  that universal stability spaces are  spaces of finite dimensions; and  up to an isomorphism, a universal right-stability space is  just a Hilbert space. By using
 Theorem \ref{T:1.3}  we then prove that every injective space is universally  left-stable; and a Banach space $X$ which is isomorphic to a subspace of $\ell_\infty$ is universally left-stable if and only if it is isomorphic to $\ell_\infty$. Finally, applying  Zippin's theorem \cite{zip} we verify that a separable space $X$ satisfies that $(X,Y)$ is stable for every separable $Y$ if and only if it is  isomorphic to $c_0$.

All symbols and notations in this paper are standard. We use $X$ to denote a real Banach space and $X^*$ its dual. $B_X$ and $S_X$ denote the closed unit ball and the unit sphere of $X$, respectively.
Given a bounded linear operator $T:X\rightarrow Y$, $T^*:Y^*\rightarrow X^*$ stands for its conjugate operator. For a subset $A\subset X$, $\overline{A}$ stands for the closure of $A$, and card$(A)$, the cardinality of $A$.

\section{Universal (right-) stability spaces for $\eps$-isometries}
In this section, we search for some properties of the class of universal left (right)-stability spaces for $\eps$-isometries.

Recall that a Banach space $X\; (Y)$ is universally left (right)-stable if it satisfies that for every Banach space $Y \;(X)$ and for every standard $\eps$-isometry $f:X\rightarrow Y$,
there exist  $\alpha, \gamma>0$ and a bounded operator $T:$
$L(f)\rightarrow X$ with $\|T\|\leq\alpha$ so that
\begin{align}\label{E:2.1}
\|Tf(x)-x\|\leq \gamma\varepsilon ,\;\;for\;all \;x\in X.\end{align}
A universal stability space is a Banach space which is both universally left and right stable.  As a result,  we show inequality \eqref{E:2.1} holds for every Banach space $X$ if and only if $Y$ is, up to linear isomorphism, a Hilbert space;  and universal stability spaces are just finite dimensional spaces.

The following lemma follows from Qian's counterexample\cite{qian}.
\begin{lem}\label{L:2.1}
Let $X$ be a closed subspace of a Banach space $Y$. If $\text{card}(X)=\text{card}(Y)$, then  for every $\eps>0$ there is a standard $\eps$-isometry $f:X\rightarrow Y$
such that

(1)  $L(f)\equiv\overline{\text{span}}f(X)=Y$;

(2)  $X$ is complemented whenever $f$ is stable.
\end{lem}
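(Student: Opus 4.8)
The plan is to realize $f$ as a small nonlinear perturbation of the inclusion $j\colon X\hookrightarrow Y$, in the spirit of Qian's counterexample, and to use the hypothesis $\text{card}(X)=\text{card}(Y)$ to conceal inside the perturbation a linearly dense subset of $Y$. First I would fix a subset $D\subset\frac{\eps}{2}B_Y$ that is dense in $\frac{\eps}{2}B_Y$, so that $\overline{\text{span}}\,D=Y$; its cardinality is at most the density character of $Y$, hence at most $\text{card}(Y)=\text{card}(X)$. I would then fix a Hamel basis $H$ of $X$ and choose an injection $d\mapsto x_d$ of $D$ into $X\setminus(H\cup\{0\})$, which is possible because $\text{card}\big(X\setminus(H\cup\{0\})\big)=\text{card}(X)\geq\text{card}(D)$. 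Defining $f(x_d)=x_d+d$ for $d\in D$ and $f(x)=x$ otherwise, the perturbation $g(x)\equiv f(x)-x$ obeys $\|g(x)\|\leq\eps/2$ with $g(0)=0$, so $\big|\,\|f(x)-f(y)\|-\|x-y\|\,\big|\leq\|g(x)\|+\|g(y)\|\leq\eps$, i.e. $f$ is a standard $\eps$-isometry.

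Part (1) is where keeping $H$ untouched pays off: since $x_d\notin H$, we have $f(h)=h$ for every $h\in H$, so $\{f(x):x\notin\{x_d\}\}$ contains the spanning set $H$ and thus $X\subseteq L(f)$; then $d=f(x_d)-x_d\in L(f)$ for every $d\in D$, whence $D\subseteq L(f)$ and therefore $L(f)\supseteq\overline{\text{span}}\,D=Y$.

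For part (2), suppose $f$ is stable, witnessed by a linear $T\colon L(f)=Y\to X$ with $\|T\|\leq\alpha$ and $\|Tf(x)-x\|\leq\gamma\eps$ for all $x$. On an unperturbed point this reads $\|Tx-x\|\leq\gamma\eps$, while on a perturbed point $\|Tx_d-x_d\|\leq\gamma\eps+\|T\|\,\|d\|\leq\gamma\eps+\alpha\eps/2$; hence $\|Tx-x\|\leq C\equiv\gamma\eps+\alpha\eps/2$ for \emph{every} $x\in X$. Applying this uniform bound to the point $nx$ and using linearity of $T$ gives $\|Tx-x\|\leq C/n$ for all $n$, so $Tx=x$ on $X$. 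Thus $T\colon Y\to X$ is a bounded projection with $T|_X=Id$, and $X$ is complemented, proving (2).

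The step I expect to be the main obstacle is the set-theoretic bookkeeping that reconciles the construction with (1): one must ensure that deleting the (possibly large) index set $\{x_d\}$ from $X$ still leaves a linearly dense set, even though $\{x_d\}$ may carry the full cardinality $\text{card}(X)$. The device that removes this difficulty is the elementary fact that $\text{card}(X\setminus W)=\text{card}(X)$ for every proper subspace $W\subsetneq X$ (concretely, $X\setminus(H\cup\{0\})$ already contains all the vectors $2h$, $h\in H$, together with a whole punctured line), which lets me inject $D$ while leaving a Hamel basis in the unperturbed part. Everything afterwards is a routine verification; I would only stress that in (2) it is essential that the bound $C$ was obtained uniformly over all of $X$, perturbed points included, since that uniformity is exactly what the homogeneity argument needs to upgrade the estimate to the equality $T|_X=Id$.
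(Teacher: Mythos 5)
Your proof is correct and follows essentially the route the paper intends: the paper gives no proof of Lemma \ref{L:2.1}, merely noting that it ``follows from Qian's counterexample,'' and your construction is precisely that perturbation-of-the-inclusion device (hide a linearly dense subset of $Y$ inside an $\eps/2$-perturbation of $f=Id$ on $X$, then use homogeneity to force $T|_X=Id$), carried out with the cardinality bookkeeping needed to pass from Qian's separable setting to the general hypothesis $\mathrm{card}(X)=\mathrm{card}(Y)$. All the steps check out, including the two points you flag as delicate: preserving a Hamel basis among the unperturbed points, and the uniformity of the bound $\|Tx-x\|\leq\gamma\eps+\alpha\eps/2$ over perturbed points before scaling.
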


\begin{thm}\label{T:2.2} Let $Y$ be a Banach space. Then the following statements are equivalent.

i) $Y$ is universally right-stable;

ii) $Y$ is isomorphic to a Hilbert space;

iii) $Y$ is universally $(\alpha, 4)$-right-stable for some $\alpha>0$.

\end{thm}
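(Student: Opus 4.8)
The plan is to prove the cycle (ii)$\Rightarrow$(iii)$\Rightarrow$(i)$\Rightarrow$(ii), which isolates the one trivial implication from the two substantive ones. The implication (iii)$\Rightarrow$(i) is immediate: if a single $\alpha>0$ witnesses universal $(\alpha,4)$-right-stability, then for every $X$ the pair $(X,Y)$ is $(\alpha,4)$-stable, hence stable, which is exactly (i).

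For (i)$\Rightarrow$(ii) I would argue through Lindenstrauss--Tzafriri's characterization \cite{lin}: a Banach space is isomorphic to a Hilbert space if and only if each of its closed subspaces is complemented. Assuming (i), let $Z$ be a closed subspace of $Y$; I want $Z$ complemented. Finite-dimensional subspaces are automatically complemented, so I assume $Z$ is infinite dimensional and, after the routine density/cardinality bookkeeping needed to arrange $\mathrm{card}(Z)=\mathrm{card}(Y)$, apply Qian's construction, Lemma~\ref{L:2.1}, to the inclusion $Z\subseteq Y$. This yields, for each $\eps>0$, a standard $\eps$-isometry $f:Z\to Y$ with $L(f)=Y$ that is stable only if $Z$ is complemented. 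Since $Y$ is universally right-stable, $f$ is stable, so $Z$ is complemented; as this holds for every closed subspace, Lindenstrauss--Tzafriri gives (ii). The only delicate point is the cardinality hypothesis of Lemma~\ref{L:2.1}, but it is harmless, since a non-Hilbertian $Y$ already possesses an uncomplemented closed subspace which one may take (or enlarge) to have full cardinality.

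The heart of the matter is (ii)$\Rightarrow$(iii), where Theorem~\ref{T:1.3} does the quantitative work and the Hilbertian geometry supplies what Theorem~\ref{T:1.3} alone cannot. Fixing an isomorphism of $Y$ onto a Hilbert space and a standard $\eps$-isometry $f:X\to Y$, I replace $Y$ by $L(f)$ (still isomorphic to a Hilbert space, in particular reflexive) to assume $L(f)=Y$. Theorem~\ref{T:1.3} assigns to each $x^*\in X^*$ a $\phi\in Y^*$ with $\|\phi\|=\|x^*\|$ and $|\langle\phi,f(x)\rangle-\langle x^*,x\rangle|\le 4\eps\|x^*\|$ for all $x$; unwinding duality, such a $\phi$ is precisely the value $T^*x^*$ that a desired operator $T:Y\to X$ must take, because $\|Tf(x)-x\|\le 4\eps$ is equivalent to $|\langle T^*x^*,f(x)\rangle-\langle x^*,x\rangle|\le 4\eps\|x^*\|$ for all $x^*$. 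Thus the constant $4$ is inherited verbatim, and the task reduces to converting the a~priori set-valued, nonlinear correspondence $x^*\mapsto\phi$ into a bounded \emph{linear} operator $S:X^*\to Y^*$ that is, moreover, weak$^*$-to-weak$^*$ continuous, i.e.\ an adjoint $S=T^*$ with $T$ mapping into $X$. Extracting a bounded linear selection with the norm bound and the $4\eps$-estimate intact I would obtain by a weak$^*$-compactness and averaging argument, using that the admissible sets are nonempty, convex and weak$^*$-compact and behave subadditively in $x^*$, so that convexity and weak$^*$-closedness let both the norm bound and the $4\eps$-inequality survive in the limit.

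I expect the genuine obstacle to be the second requirement, and this is exactly where being (isomorphic to) a Hilbert space is indispensable: guaranteeing that $S$ is an adjoint, equivalently that $T$ lands in $X$ itself rather than merely in $X^{**}$. The naive construction only produces $T:Y\to X^{**}$ with $Tf(x)$ within $4\eps$ of $x$, and since $X$ is an arbitrary (possibly non-reflexive) space there is no projection of $X^{**}$ back onto $X$ to repair this. The remedy is to build the selection weak$^*$-continuously from the outset: reflexivity of $Y$ makes $\sigma(Y^*,Y)$ agree with the weak topology on $Y^*$, and the self-duality and orthogonal-projection structure of the Hilbert space let one choose the $\phi$'s coherently so that $\sigma(X^*,X)$-convergence is carried to weak convergence in $Y^*$, which is precisely the condition $S=T^*$ for some $T:L(f)\to X$. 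Carried through, this yields $T$ with $\|Tf(x)-x\|\le 4\eps$ and $\|T\|\le\alpha$, where $\alpha$ is controlled by the Banach--Mazur distance of $Y$ to a Hilbert space, giving (iii). The weak$^*$-continuous selection is, I anticipate, the crux of the entire argument.
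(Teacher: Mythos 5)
Your architecture ((ii)$\Rightarrow$(iii)$\Rightarrow$(i)$\Rightarrow$(ii)) matches the paper's, and (iii)$\Rightarrow$(i) is indeed trivial, but there is a genuine gap in (ii)$\Rightarrow$(iii). You correctly reduce the problem to producing a bounded \emph{linear}, weak$^*$-to-weak$^*$ continuous selection $S=T^*$ of the set-valued map $x^*\mapsto \Phi(x^*)$ furnished by Theorem~\ref{T:1.3}, and you yourself call this ``the crux of the entire argument''---but you never prove it. The proposed ``weak$^*$-compactness and averaging argument'' is not a proof: if $\phi\in\Phi(x^*)$ and $\psi\in\Phi(y^*)$, then $\phi+\psi$ only satisfies the inequality with bound $4\eps(\|x^*\|+\|y^*\|)$, which is not controlled by $4\eps\|x^*+y^*\|$, so the correspondence is not subadditive in the way an averaging/invariant-mean argument needs, the constant $4$ would not survive, and the norm constraint $\|\phi\|=\|x^*\|$ is destroyed by addition. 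The further requirement that the selection be weak$^*$-continuous (so that $T$ lands in $X$ rather than $X^{**}$) is only gestured at via ``self-duality and orthogonal-projection structure.'' The paper does not prove this step from scratch either: it observes that every closed subspace of $Y$ is $\alpha$-complemented ($\alpha=\mathrm{dist}(Y,H)$) and then invokes Theorem~4.8 of \cite{cheng}, which delivers exactly the conclusion $\|T\|\le\alpha$, $\gamma=4$ from that complementation property. Without either that citation or an actual construction, your (ii)$\Rightarrow$(iii) is a plan, not a proof.

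A secondary issue arises in (i)$\Rightarrow$(ii): applying Lemma~\ref{L:2.1} directly to an arbitrary closed subspace $Z\subseteq Y$ requires $\mathrm{card}(Z)=\mathrm{card}(Y)$, which fails for, say, a separable $Z$ inside a nonseparable $Y$ of larger cardinality, and your proposed repair (``enlarge'' an uncomplemented subspace to full cardinality) is unjustified, since enlarging a subspace can make it complemented. The paper avoids this by applying Lemma~\ref{L:2.1} \emph{inside} an arbitrary separable closed subspace $Z$ of $Y$ (where every infinite-dimensional closed subspace automatically has cardinality $\mathrm{card}(Z)=2^{\aleph_0}$), concluding via Lindenstrauss--Tzafriri that every such $Z$ is isomorphic to a Hilbert space, and then passing to $Y$ itself. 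This part of your argument is fixable bookkeeping, but as written it does not cover nonseparable $Y$.
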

\begin{proof}
i) $\Longrightarrow$ ii). By definition of universal right-stability, every closed  subspace of $Y$ is again universally right-stable. Fix any closed separable subspace $Z$ of $Y$. By Lemma \ref{L:2.1}, universal right-stability of $Z$ entails that every closed subspace of $Z$ is complemented in $Z$. According to Lindenstrauss-Tzafriri's theorem \cite{lin}: "a Banach space satisfying that every closed subspace is complemented is isomorphic to a Hilbert space", $Z$ is isomorphic to a (separable) Hilbert space. Hence, $Y$ itself is isomorphic to a Hilbert space.

ii) $\Longrightarrow$ iii). Suppose that $Y$ is  isomorphic to a Hilbert space $H$. Let $\alpha=\text{dist}(Y,H)$, the Banach-Mazur distance between $Y$ and $H$.  Then every closed subspace of $Y$ is $\alpha$-complemented in $Y$. Given $\eps>0$ and any standard  $\eps$-isometry $f:X\rightarrow Y$,  according to Theorem 4.8 of \cite{cheng}, inequality \eqref{E:2.1} holds for some $T: L(f)\rightarrow X$ with $\|T\|\leq\alpha$ and with $\gamma=4$, i.e., $Y$ is universally $(\alpha,4)$-right stable.

iii) $\Longrightarrow$ i). It is trivial.
\end{proof}

\begin{thm}\label{T:2.3}
A normed space $X$ is universally-stable if and only if it is finite dimensional.
\end{thm}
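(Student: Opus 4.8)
The plan is to prove both implications, treating the easy direction first. If $\dim X<\infty$, I would show $X$ is simultaneously universally left- and right-stable. Right-stability is immediate from Theorem~\ref{T:2.2}: a finite dimensional space is isomorphic to a (finite dimensional) Hilbert space, hence universally $(\alpha,4)$-right-stable. For left-stability, fix any $Y$ and any standard $\eps$-isometry $f:X\to Y$, and let $n=\dim X$. Choosing a basis $x_1^*,\dots,x_n^*$ of $X^*$, Theorem~\ref{T:1.3} produces $\phi_1,\dots,\phi_n\in Y^*$ with $\|\phi_i\|=\|x_i^*\|$ and $|\langle\phi_i,f(x)\rangle-\langle x_i^*,x\rangle|\le 4\eps\|x_i^*\|$ for all $x$. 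Writing $J:X\to\R^n$, $Jx=(\langle x_i^*,x\rangle)_{i=1}^n$, which is an isomorphism, and $S:L(f)\to\R^n$, $Sy=(\langle\phi_i,y\rangle)_{i=1}^n$, the operator $T=J^{-1}S:L(f)\to X$ is bounded and satisfies $Tf(x)-x=J^{-1}\big((\langle\phi_i,f(x)\rangle-\langle x_i^*,x\rangle)_{i=1}^n\big)$, whose norm is $\le\gamma\eps$ for a constant $\gamma$ depending only on $n$, $\|J^{-1}\|$ and $\max_i\|x_i^*\|$. Thus $f$ is $(\alpha,\gamma)$-stable, so $X$ is universally left-stable and therefore universally stable.

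For the converse, suppose $X$ is universally stable; I aim to derive $\dim X<\infty$ by contradiction, so assume $\dim X=\infty$. Since $X$ is in particular universally right-stable, Theorem~\ref{T:2.2} gives that $X$ is isomorphic to a Hilbert space; in particular $X$ is infinite dimensional and reflexive. The strategy is then to exhibit a single Banach space $Y\supseteq X$ in which $X$ sits as an \emph{uncomplemented} closed subspace with $\mathrm{card}(X)=\mathrm{card}(Y)$: Lemma~\ref{L:2.1} will then furnish, for each $\eps>0$, a standard $\eps$-isometry $f:X\to Y$ with $L(f)=Y$ that cannot be stable, since stability would force $X$ to be complemented in $Y$. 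This contradicts universal left-stability of $X$, completing the proof.

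To construct $Y$, I would embed $X$ isometrically into $Y=C(K)$ with $K=(B_{X^*},w^*)$, which is possible via the canonical evaluation map; here the weight of $K$ equals the density character $\kappa$ of $X$, so $\mathrm{dens}\,C(K)=\kappa=\mathrm{dens}\,X$ and hence $\mathrm{card}\,C(K)=\kappa^{\aleph_0}=\mathrm{card}\,X$, meeting the cardinality hypothesis of Lemma~\ref{L:2.1}. It then remains to see that $X$ is not complemented in this $C(K)$. Here I would invoke the classical fact that an infinite dimensional reflexive space can never be isomorphic to a complemented subspace of a $C(K)$-space (a complemented subspace of $C(K)$ is an $\mathcal{L}_\infty$-space, and an infinite dimensional $\mathcal{L}_\infty$-space contains $c_0$ and so is non-reflexive). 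Since $X$ is reflexive and infinite dimensional, it is uncomplemented in $C(K)$, as required.

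The main obstacle is the simultaneous control of the two requirements in Lemma~\ref{L:2.1}: the superspace must both make $X$ uncomplemented and have the same cardinality as $X$. One could instead argue abstractly that an infinite dimensional Hilbert space is not injective and hence uncomplemented in some superspace, but then the cardinality of that superspace is not under control; the virtue of the explicit $C(K)=C(B_{X^*})$ embedding is that it pins down the density character, and thus the cardinality, while the reflexive-versus-$C(K)$ dichotomy guarantees non-complementation. Verifying the density and cardinality bookkeeping for $C(B_{X^*})$ and citing the non-complementation dichotomy in the precise form needed are the two points requiring the most care.
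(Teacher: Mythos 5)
Your proof is correct, and both halves take routes genuinely different from the paper's. For sufficiency, the paper simply cites Theorem 3.4 of \cite{cheng} ($n$-dimensional spaces are universally left-stable with $\gamma=4n$), while you rederive left-stability directly from Theorem \ref{T:1.3} using a basis of $X^*$; your constants depend on the basis and on $\|J^{-1}\|$ rather than on $n$ alone (an Auerbach basis would recover the $4n$ bound), which is harmless since the definition of a stable pair does not require dimension-only constants. For necessity, the paper first reduces to separable $X$, embeds it isometrically into $\ell_\infty$, and derives uncomplementedness from Lindenstrauss' theorem that $\ell_\infty$ is prime, the cardinality hypothesis of Lemma \ref{L:2.1} being automatic in that case ($\mathrm{card}(X)=\mathrm{card}(\ell_\infty)=2^{\aleph_0}$). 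You instead keep $X$ of arbitrary density, embed it into $C(K)$ with $K=(B_{X^*},w^*)$, and derive uncomplementedness from the classical fact that an infinite-dimensional reflexive space is never complemented in a $C(K)$-space; your cardinality bookkeeping is sound, since the upper bound $\mathrm{card}\,C(K)\le\kappa^{\aleph_0}$ comes from the density computation and the lower bound comes for free from $X\subseteq C(K)$. Your route leans on heavier classical machinery (the Lindenstrauss--Rosenthal $\mathcal{L}_\infty$ theory, or equivalently Grothendieck's Dunford--Pettis property argument) where the paper uses primeness of $\ell_\infty$, but it buys a genuine advantage: it treats nonseparable $X$ directly. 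That matters, because the paper's reduction to separable $X$ is justified only by inheritance of universal \emph{right}-stability under passing to closed subspaces, whereas the contradiction it ultimately reaches --- an unstable $\eps$-isometry from the separable subspace into $\ell_\infty$ furnished by Lemma \ref{L:2.1} --- contradicts \emph{left}-stability of that subspace, and inheritance of universal left-stability by closed subspaces is nowhere established in the paper. Your argument sidesteps that subtlety entirely.
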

\begin{proof}
Sufficiency.
Since every finite dimensional normed space is isomorphic to an Euclidean space, Theorem \ref{T:2.2} entails that it is universally right-stable. While Theorem 3.4 of \cite{cheng} says that $n$ dimensional spaces are universally left-stable with the parameter $\gamma=4n$.

Necessity. Suppose, to the contrary, that $X$ is infinite dimensional. Since $X$ is also universally right-stable, according to Theorem \ref{T:2.2} we have just proven, it is isomorphic to a Hilbert space. Since every closed subspace of a universally right-stable space is again universally right-stable, we can assume that $X$ is separable. Thus, $X$ is isometric to a  subspace of $\ell_\infty$. Since $\ell_\infty$ is prime \cite{lind} (i.e. every complemented infinite dimensional subspace is isomorphic to it), $X$ is uncomplemented in $\ell_\infty$.  Note ${\rm card}(X)={\rm card}\ell_\infty$. By Lemma \ref{L:2.1}, there is a unstable standard $\eps$-isometry $f:X\rightarrow\ell_\infty$ for every $\eps>0$, which is a contradiction to universal stability of $X$.
\end{proof}

\section{Universal left-stability spaces}

In this section, we consider properties of universal left-stability spaces. We shall show that (1) an injective Banach space is universally left-stable; (2)  a Banach space  isomorphic to a subspace of $\ell_\infty$ is universally left-stable if and only if it is  isomorphic to $\ell_\infty$ and (3) for a separable Banach space $X$, $(X,Y)$ is stable for every separable Banach space $Y$ if and only if $X$ is a separably injective Banach space.

A Banach space $X$ is said to be $\lambda$-injective (or, simply, injective) if it has the following extension property:
Every bounded linear operator $T$ from a closed subspace of a Banach space into $X$ can be extended to be a bounded operator on the whole space with its norm at most $\lambda\|T\|$ (see, for instance,  \cite{Alb}).  Goondner \cite{Goo} introduced a family of Banach spaces coinciding with the family of injective spaces: for any $\lambda\geq1$, a Banach space X is a $P_\lambda$-space if, whenever $X$ is
isometrically embedded in another Banach space, there is a projection onto the
image of X with norm not larger than $\lambda$. The following result is due to Day \cite{day} (see, also,  Wolfe \cite{w}, Fabian et al. \cite{fa}, p. 242).

\begin{prop}\label{P:3.1} A Banach space $X$ is $\lambda$-injective
if and only if it is a $P_\lambda$-space.
\end{prop}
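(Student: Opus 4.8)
The plan is to prove the two implications separately, with the forward direction being essentially immediate and the converse requiring a pushout (amalgamation) construction.

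For the implication that $\lambda$-injectivity implies the $P_\lambda$-property, I would suppose $X$ is isometrically embedded in a Banach space $W$ and identify $X$ with its image, so that $X$ is a closed subspace of $W$. Consider the identity operator $\mathrm{Id}_X:X\rightarrow X$ as a bounded operator defined on the subspace $X$ of $W$, of norm $1$. By $\lambda$-injectivity it extends to an operator $P:W\rightarrow X$ with $\|P\|\leq\lambda$. Since $P$ restricts to the identity on $X$, composing $P$ with the inclusion $X\hookrightarrow W$ yields a projection of $W$ onto $X$ of norm at most $\lambda$, so $X$ is a $P_\lambda$-space.

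For the converse, assume $X$ is a $P_\lambda$-space and let $T:Z\rightarrow X$ be a bounded operator on a closed subspace $Z$ of a Banach space $W$, with isometric inclusion $i:Z\hookrightarrow W$. One may assume $T\neq 0$ and, after rescaling, $\|T\|\leq 1$. First I would form the pushout of the pair $(i,T)$: on the $\ell_1$-sum $W\oplus_1 X$ take the closed subspace $N=\{(iz,-Tz):z\in Z\}$ and set $\mathcal{P}=(W\oplus_1 X)/N$, with canonical maps $j_W(w)=(w,0)+N$ and $j_X(x)=(0,x)+N$. A direct computation of the quotient norm gives $\|j_X(x)\|=\inf_{z\in Z}\bigl(\|z\|+\|x+Tz\|\bigr)$, and the triangle inequality together with $\|T\|\leq 1$ shows this infimum equals $\|x\|$; thus $j_X$ is an isometric embedding, while $\|j_W\|\leq 1$ and $j_W\circ i=j_X\circ T$.

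Then I would invoke the $P_\lambda$-property of $X$: since $j_X(X)$ is an isometric copy of $X$ inside $\mathcal{P}$, there is a projection $P:\mathcal{P}\rightarrow j_X(X)$ with $\|P\|\leq\lambda$. Setting $\widetilde{T}=j_X^{-1}\circ P\circ j_W:W\rightarrow X$, the identity $j_W\circ i=j_X\circ T$ together with $P|_{j_X(X)}=\mathrm{Id}$ gives $\widetilde{T}(iz)=Tz$ for all $z\in Z$, so $\widetilde{T}$ extends $T$, while $\|\widetilde{T}\|\leq\|j_X^{-1}\|\,\|P\|\,\|j_W\|\leq\lambda=\lambda\|T\|$, which establishes $\lambda$-injectivity. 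The main obstacle is verifying that $j_X$ is an isometric embedding into the pushout, since everything downstream depends on identifying the quotient norm of $\mathcal{P}$ restricted to $j_X(X)$ with the original norm of $X$; the normalization $\|T\|\leq 1$ is precisely what makes this identification hold.
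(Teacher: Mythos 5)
Your proof is correct, but note that the paper itself contains no proof of Proposition \ref{P:3.1}: it simply quotes the result from Day (see also Wolfe and Fabian et al., p.~242), so the comparison below is with the standard argument in those references rather than with an in-paper proof. Your forward direction (extend $\mathrm{Id}_X$ to get the projection) is the same as everywhere. For the converse, the classical route --- and the one consistent with this paper's toolkit (cf.\ Remark \ref{R:3.2} and the proof of Theorem \ref{T:3.3}) --- embeds $X$ canonically into $\ell_\infty(B_{X^*})$, invokes the $P_\lambda$-property to get a projection $P$ of norm at most $\lambda$ onto the image, extends a given $T:Z\rightarrow X\subset\ell_\infty(B_{X^*})$ coordinatewise by Hahn--Banach to a norm-preserving $\widehat{T}:W\rightarrow\ell_\infty(B_{X^*})$, and sets $\widetilde{T}=P\circ\widehat{T}$. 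Your pushout argument is genuinely different: instead of one fixed $1$-injective ambient space, you build a bespoke superspace $\mathcal{P}$ for each extension problem, and the quotient-norm computation showing that $j_X$ is isometric (which indeed hinges on the normalization of $T$, as you say) plays exactly the role that coordinatewise Hahn--Banach plays in the classical proof. Both work; the $\ell_\infty(\Gamma)$ route is shorter because $1$-injectivity of $\ell_\infty(\Gamma)$ comes for free, while your construction is self-contained and is the one that generalizes to extension problems (e.g., separable injectivity, where Theorem \ref{T:3.5} lives) in which no convenient $1$-injective ambient space is available. One small slip to fix: after rescaling you should take $\|T\|=1$ exactly, not merely $\|T\|\leq 1$, since your final estimate $\|\widetilde{T}\|\leq\lambda$ equals $\lambda\|T\|$ only in that case; equivalently, extend $T/\|T\|$ and multiply back. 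You should also record the one-line reason that $N$ is closed, namely $\|(iz,-Tz)\|_1\geq\|z\|$, so $z\mapsto(iz,-Tz)$ is an isomorphism from the complete space $Z$ onto $N$.
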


\begin{rem}\label{R:3.2}
For any set $\Gamma$, that  $\ell_\infty(\Gamma)$ is  $1$-injective follows from the Hahn-Banach theorem.
\end{rem}
\begin{thm}\label{T:3.3} Every $\lambda$-injective space is universally $(\lambda,4\lambda)$-left-stable.
\end{thm}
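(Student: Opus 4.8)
The plan is to reduce to the model space $\ell_\infty(\Gamma)$, where the coordinate structure lets me assemble a contraction coordinatewise out of the weak-stability functionals supplied by Theorem \ref{T:1.3}, and then to push that operator into $X$ using a bounded projection furnished by injectivity. The point is that injectivity provides exactly the mechanism needed to transfer the naturally $\ell_\infty$-valued construction back to the target space $X$ at the cost of the factor $\lambda$.

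First I would fix the canonical isometric embedding $J:X\to\ell_\infty(\Gamma)$ with $\Gamma=B_{X^*}$, given by $(Jx)(x^*)=\langle x^*,x\rangle$; that $\|Jx\|=\|x\|$ is immediate from the definition of the norm on $X$. Since $X$ is $\lambda$-injective, Proposition \ref{P:3.1} tells us $X$ is a $P_\lambda$-space, so the isometric copy $J(X)\subseteq\ell_\infty(\Gamma)$ admits a projection of norm at most $\lambda$; transporting it through $J^{-1}$ yields a bounded linear operator $P:\ell_\infty(\Gamma)\to X$ with $\|P\|\leq\lambda$ and $PJ=Id_X$.

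Next, let $f:X\to Y$ be a standard $\eps$-isometry and put $E=L(f)$. For each index $x^*\in\Gamma=B_{X^*}$, Theorem \ref{T:1.3} provides $\phi_{x^*}\in Y^*$ with $\|\phi_{x^*}\|=\|x^*\|\leq1$ and $|\langle\phi_{x^*},f(x)\rangle-\langle x^*,x\rangle|\leq4\eps\|x^*\|\leq4\eps$ for all $x\in X$. I would then define $S:E\to\ell_\infty(\Gamma)$ coordinatewise by $(Sy)(x^*)=\langle\phi_{x^*},y\rangle$ for $y\in E\subseteq Y$. Because $|(Sy)(x^*)|\leq\|\phi_{x^*}\|\,\|y\|\leq\|y\|$, the image $Sy$ is a bounded function on $\Gamma$, so $S$ is well defined with $\|S\|\leq1$; linearity of $S$ is automatic since each coordinate $y\mapsto\langle\phi_{x^*},y\rangle$ is linear. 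Moreover $|(Sf(x))(x^*)-(Jx)(x^*)|=|\langle\phi_{x^*},f(x)\rangle-\langle x^*,x\rangle|\leq4\eps$, so $\|Sf(x)-Jx\|_\infty\leq4\eps$ for every $x\in X$.

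Finally I would set $T=PS:E\to X$. Then $\|T\|\leq\|P\|\,\|S\|\leq\lambda$, and using $PJ=Id_X$,
\[
\|Tf(x)-x\|=\|P(Sf(x))-P(Jx)\|=\|P(Sf(x)-Jx)\|\leq\|P\|\,\|Sf(x)-Jx\|_\infty\leq4\lambda\eps,
\]
which is exactly \eqref{E:2.1} with $\alpha=\lambda$ and $\gamma=4\lambda$. The only genuinely delicate point is the construction of the linear operator $S$ out of the selection $x^*\mapsto\phi_{x^*}$, which need not depend linearly (or even measurably) on $x^*$: the resolution is that linearity is required only in the variable $y$, where it holds coordinatewise, while the uniform boundedness over $\Gamma$ needed to land inside $\ell_\infty(\Gamma)$ is guaranteed by the norm control $\|\phi_{x^*}\|=\|x^*\|\leq1$ coming from Theorem \ref{T:1.3}.
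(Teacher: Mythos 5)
Your proposal is correct and follows essentially the same route as the paper's own proof: embed $X$ canonically into $\ell_\infty(B_{X^*})$, use the $P_\lambda$-property to obtain a projection $P$ of norm at most $\lambda$, build the coordinatewise operator from the functionals $\phi_{x^*}$ supplied by Theorem \ref{T:1.3}, and compose with $P$. The only cosmetic difference is that you factor the operator explicitly as $T=PS$ with an intermediate map $S:L(f)\to\ell_\infty(\Gamma)$, while the paper writes $T(y)=P(\phi_\gamma(y))_{\gamma\in\Gamma}$ directly on $Y$.
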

\begin{proof}  Let $X$ be a $\lambda$-injective Banach space. We can assume that $X$ is a closed complemented subspace of $\ell_\infty(\Gamma)$; otherwise, we can identify $X$ for its canonical embedding  $J(X)$ as a  subspace of
$\ell_\infty(\Gamma)$, where $\Gamma$ denotes the closed ball $B_{X^*}$ of $X^*$. Hence, it is $\lambda$-complemented in $\ell_\infty(\Gamma)$. Let $P: \ell_\infty(\Gamma)\rightarrow X$ be a projection such that $\|P\|\leq\lambda$. Given any $\beta\in \Gamma$, let $\delta_\beta \in\ell_\infty(\Gamma)^*$ be defined for $x=(x(\gamma))_{\gamma \in
\Gamma}\in \ell_\infty(\Gamma)$ by $\delta_\beta (x)=x(\beta).$ Assume
that $f : X\rightarrow Y$ is a standard $\eps$-isometry. For every $x^* \in X^*$,
by Theorem \ref{T:1.3}, there is
$\phi\in Y^*$ with $\|\phi\|=\|x^*\|$ such that
\begin{align}\label{E:3.1}
|\langle\phi,f(x)\rangle-\langle x^*,x\rangle|
\leq4\eps\|x^*\|,\; {\rm for \;all}\; x\in X.\end{align}
In particular, letting $x^*=\delta_\gamma$ in \eqref{E:3.1} for every fixed $\gamma\in\Gamma$, we obtain
a linear functional $\phi_\gamma\in Y^*$ satisfying \eqref{E:3.1} with $\|\phi_\gamma\|=\|\delta_\gamma\|_X\leq1.$ Therefore, $(\phi_\gamma(y))_{\gamma\in\Gamma}\in\ell_\infty(\Gamma)$ for every $y\in Y$.

Let $T(y)=P(\phi_{\gamma}(y))_{\gamma \in \Gamma}$, for all $y\in Y$, and note $P|_X=I_X$, the identity from $X$ to itself.
Then $\|T\|\leq\|P\|\leq\lambda$ and for all $x\in X$,

\begin{align}\|Tf(x)-x\|&=\|P(\phi_{\gamma}(f(x)))_{\gamma \in
\Gamma}-(\delta_{\gamma}(x))_{\gamma \in \Gamma}\|\notag\\
&=\|P(\phi_{\gamma}(f(x)))_{\gamma \in
\Gamma}-P((\delta_{\gamma}(x))_{\gamma \in \Gamma})\|\notag\\
&\leq\|P\|\cdot\|(\phi_{\gamma}(f(x)))_{\gamma \in
\Gamma}-(\delta_\gamma(x))_{\gamma \in \Gamma}\|_\infty\leq 4\lambda\eps.\notag\end{align}
\end{proof}
\begin{thm}\label{T:3.4}
 Let $X$ be a Banach space $X$ isomorphic to an infinite dimensional subspace of $\ell_\infty$. Then the following statements are equivalent.

 i) $X$ is universally left-stable;

 ii) $X$ is  isomorphic to $\ell_\infty$.

 iii) $X$ is universally $(\lambda,4\lambda)$-left stable, where $\lambda=\text{dist}(X,\ell_\infty)$.
\end{thm}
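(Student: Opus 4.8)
The plan is to prove the cycle ii) $\Rightarrow$ iii) $\Rightarrow$ i) $\Rightarrow$ ii), so that the two easy links carry the isomorphic parameter and the hard link manufactures the isomorphism onto $\ell_\infty$. For ii) $\Rightarrow$ iii): if $X$ is isomorphic to $\ell_\infty$ with $\mathrm{dist}(X,\ell_\infty)=\lambda$, then, since $\ell_\infty$ is $1$-injective (Remark \ref{R:3.2}), extending an operator into $\ell_\infty$ and sandwiching with the isomorphism and its inverse shows $X$ is $\lambda$-injective; Theorem \ref{T:3.3} then yields universal $(\lambda,4\lambda)$-left-stability. The link iii) $\Rightarrow$ i) is immediate, since $(\lambda,4\lambda)$-left-stability is a particular case of left-stability.

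The substance is i) $\Rightarrow$ ii). First I would upgrade the hypothesis from an isomorphic to an isometric embedding. The point is that $X$ is isomorphic to a subspace of $\ell_\infty=\ell_\infty(\N)$ if and only if $B_{X^*}$ is weak$^*$-separable: an isomorphic embedding pulls back the coordinate functionals to a countable total set in $X^*$, whose weak$^*$-closed span is all of $X^*$, giving weak$^*$-separability; conversely a weak$^*$-dense sequence $(x_n^*)\subset B_{X^*}$ satisfies $\sup_n|x_n^*(x)|=\|x\|$ by weak$^*$-continuity of evaluation, so $x\mapsto(x_n^*(x))_n$ embeds $X$ \emph{isometrically} into $\ell_\infty$. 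Since universal left-stability is plainly an isometric invariant (composing a witnessing $T$ with the surjective isometry leaves the $\eps$-isometry condition and $L(f)$ untouched), I may henceforth assume $X$ is a genuine closed subspace of $\ell_\infty$.

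With $X\subseteq\ell_\infty$ isometrically, I would match cardinalities: an infinite-dimensional Banach space contains a line and so has cardinality at least $\mathfrak{c}$, while $\mathrm{card}\,\ell_\infty=\mathfrak{c}$, whence $\mathrm{card}\,X=\mathrm{card}\,\ell_\infty$. Lemma \ref{L:2.1} applied to $X\subseteq\ell_\infty$ then furnishes, for each $\eps>0$, a standard $\eps$-isometry $f:X\to\ell_\infty$ with $L(f)=\ell_\infty$ and with the property that $X$ is complemented in $\ell_\infty$ whenever $f$ is stable. Universal left-stability of $X$ forces $(X,\ell_\infty)$ to be stable, so this $f$ is stable, and hence $X$ is complemented in $\ell_\infty$. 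As $X$ is infinite-dimensional and $\ell_\infty$ is prime \cite{lind}, $X$ is isomorphic to $\ell_\infty$, which is ii).

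The main obstacle is exactly the cardinality constraint in Lemma \ref{L:2.1}: that lemma is useless against a generic $1$-injective superspace such as $\ell_\infty(B_{X^*})$, whose cardinality dwarfs $\mathrm{card}\,X$, so one cannot merely embed $X$ into an arbitrary injective space and read off complementation there. The isometric-embedding upgrade of the first step is precisely what keeps the ambient space equal to $\ell_\infty(\N)$ and thereby makes the cardinalities agree. The one subtlety to guard is that the reduction must invoke isometric (not merely isomorphic) invariance of universal left-stability, because an $\eps$-isometry composed with a non-isometric isomorphism distorts distances multiplicatively and so need not remain an $\eps$-isometry.
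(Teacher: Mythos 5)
Your cycle ii) $\Rightarrow$ iii) $\Rightarrow$ i) coincides with the paper's, and your endgame for i) $\Rightarrow$ ii) (the cardinality count, Lemma \ref{L:2.1}, and primeness of $\ell_\infty$ via Lindenstrauss \cite{lind}) is also exactly the paper's. The genuine gap is in your preliminary reduction to an isometric copy of $X$ inside $\ell_\infty$. You assert that isomorphic embeddability of $X$ into $\ell_\infty$ is equivalent to weak$^*$-separability of $B_{X^*}$, but your justification of the forward direction only produces a countable, bounded, norming-up-to-a-constant subset of $X^*$; this yields weak$^*$-separability of the \emph{space} $X^*$ (and of the weak$^*$-closed convex hull $C$ of the pulled-back coordinate functionals), not of the \emph{ball} $B_{X^*}$. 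These are not interchangeable: one only gets $cB_{X^*}\subseteq C\subseteq B_{X^*}$ with $C$ weak$^*$-separable, and a weak$^*$-compact subset of a weak$^*$-separable compactum need not be separable (compare $\beta\N\setminus\N$ inside $\beta\N$), since $(B_{X^*},w^*)$ is not metrizable unless $X$ is separable. What you actually need --- that every space isomorphic to a subspace of $\ell_\infty$ embeds \emph{isometrically} into $\ell_\infty$, equivalently that weak$^*$-separability of the dual ball is an isomorphic invariant --- is a genuinely delicate question and does not follow from your argument.

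The paper sidesteps this by renorming the ambient space rather than re-embedding $X$: given a linear embedding $T:X\rightarrow\ell_\infty$ with range $Z$, put $|Tx|=\|x\|$ on $Z$ and extend it to the equivalent norm $\||u\||=\inf\{|v|+\lambda\|u-v\|: v\in Z\}$ on $\ell_\infty$ for sufficiently large $\lambda$. Then $X$ is isometric to a closed subspace of $(\ell_\infty,\||\cdot\||)$, which still has cardinality $\mathrm{card}(\ell_\infty)$, so Lemma \ref{L:2.1} applies and shows that $X$ is complemented there; complementation is independent of the choice of equivalent norm, so $X$ is complemented in $\ell_\infty$ and Lindenstrauss' theorem finishes the proof. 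If you replace your weak$^*$-separability step by this renorming step, the remainder of your argument goes through unchanged, including your (correct) observation that one must not simply compose an $\eps$-isometry with a non-isometric isomorphism.
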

\begin{proof}

i) $\Longrightarrow$ ii). Since $\dim X=\infty$ and since it is isomorphic to a subspace of $\ell_\infty$, we have
\begin{align}\text{card}(X)\geq\aleph=\aleph^{\mathbb N}=\text{card}(\mathbb R^\mathbb N)=\text{card}(\ell_\infty)\geq \text{card}(X).\end{align}
 Assume that $X$ is universally left-stable.
We can put an equivalent norm $|\|\cdot|\|$ on $\ell_\infty$ such that $X$ is isometric to a closed subspace of $(\ell_\infty,|\|\cdot|\|)$. Indeed, Let $T: X\rightarrow\ell_\infty$ be a linear embedding and let $|\cdot|$ on $Z\equiv T(X)$ be defined by
$|z|=\|x\|$ for all $z=Tx\in Z$. Then, we choose a sufficiently large $\lambda>0$ and define $\||\cdot\||$ on $\ell_\infty$ by
$\||u\||=\inf\{|v|+\lambda\|u-v\|: v\in Z\}.$ Clearly, the norm $\||\cdot\||$ has the property we desired. Applying Lemma \ref{L:2.1}, we observe that $X$ is complemented in  $(\ell_\infty,\||\cdot|\|)$, hence, in $\ell_\infty.$ By Lindenstrauss' theorem \cite{lind},  $X$ is isomorphic to $\ell_\infty$.

ii) $\Longrightarrow$ iii). Suppose that $X$ is isomorphic to $\ell_\infty$. Since $\ell_\infty$ is $1$-injective (Remark \ref{R:3.2}), $X$ is necessarily $\lambda$-injective ($\lambda=\text{dist}(X,\ell_\infty)$). By Theorem 3.3, $X$ is universally $(\lambda,4\lambda)$-left stable.

iii) $\Longrightarrow$ i). It is trivial.
\end{proof}
A separable Banach space $X$ is said to be  separably injective if it has the following extension property:
Every bounded linear operator from a closed subspace of a separable Banach space into $X$ can be extended to be a bounded operator on the whole space. In 1941, Sobczyk \cite {sob} showed that $c_0$ is separably
 injective, and  Zippin  (\cite{zip}, 1977) further
proved that $c_0$ is, up to isomorphism, the only separable separably injective space.

With the aim of Zippin's theorem, we can prove the following theorem, which says that $c_0$ is (up to isomorphism) the only space satisfying inequality \eqref{E:2.1} for every separable $Y$.

\begin{thm}\label{T:3.5} Let $X$ be a separable Banach space. Then the following statements are equivalent.

i) $(X,Y)$ is stable for every separable Banach space $Y$;

ii) $X$ is isomorphic to $c_0$;

iii) $(X,Y)$ is $(2\alpha,8\alpha)$-stable for every separable Banach space $Y$, where $\alpha=\|T\|\|T^{-1}\|$ for any isomorphism $T: X\rightarrow c_0$.

\end{thm}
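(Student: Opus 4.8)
The plan is to prove the cycle (ii) $\Rightarrow$ (iii) $\Rightarrow$ (i) $\Rightarrow$ (ii), assuming throughout that $X$ is infinite dimensional (otherwise no isomorphism $T:X\to c_0$ exists and (iii) is vacuous). The implication (iii) $\Rightarrow$ (i) is immediate, since $(2\alpha,8\alpha)$-stability is a special case of stability.

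For (ii) $\Rightarrow$ (iii) I would run the argument of Theorem \ref{T:3.3}, but with the global injectivity of $\ell_\infty(\Gamma)$ replaced by the \emph{separable} injectivity of $c_0$ furnished by Sobczyk's theorem \cite{sob}. Fix an isomorphism $T:X\to c_0$ with $\alpha=\|T\|\,\|T^{-1}\|$ and let $\{e_n^*\}\subset\ell_1=c_0^*$ be the coordinate functionals. Given a standard $\eps$-isometry $f:X\to Y$ with $Y$ separable, apply Theorem \ref{T:1.3} to each $x_n^*:=T^*e_n^*\in X^*$ (so $\|x_n^*\|\le\|T\|$) to get $\phi_n\in Y^*$ with $\|\phi_n\|\le\|T\|$ and
\begin{align*}
|\langle\phi_n,f(x)\rangle-(Tx)_n|=|\langle\phi_n,f(x)\rangle-\langle x_n^*,x\rangle|\le 4\eps\|T\|,\qquad x\in X.
\end{align*}
Then $\Phi:L(f)\to\ell_\infty$, $\Phi(y)=(\phi_n(y))_n$, has $\|\Phi\|\le\|T\|$ and $\|\Phi f(x)-Tx\|_\infty\le4\eps\|T\|$. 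The decisive point is that $L(f)$, being a subspace of the separable space $Y$, is separable, so $V:=\overline{\Phi(L(f))+c_0}$ is a separable subspace of $\ell_\infty$ containing $c_0$; Sobczyk's theorem provides a projection $P:V\to c_0$ with $\|P\|\le 2$. Putting $S:=T^{-1}P\Phi:L(f)\to X$ gives $\|S\|\le2\alpha$, and since $P$ fixes $Tx\in c_0$,
\begin{align*}
\|Sf(x)-x\|=\|T^{-1}P(\Phi f(x)-Tx)\|\le\|T^{-1}\|\,\|P\|\,\|\Phi f(x)-Tx\|_\infty\le 8\alpha\eps,
\end{align*}
which is exactly $(2\alpha,8\alpha)$-stability.

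The heart of the theorem is (i) $\Rightarrow$ (ii), where I would show that hypothesis (i) forces $X$ to be separably injective and then invoke Zippin's theorem. Let $Z$ be any separable Banach space containing $X$ as a closed subspace. As both spaces are separable and infinite dimensional, $\mathrm{card}(X)=\mathrm{card}(Z)=\mathfrak{c}$, so Lemma \ref{L:2.1} yields, for each $\eps>0$, a standard $\eps$-isometry $f:X\to Z$ with $L(f)=Z$ that, when stable, makes $X$ complemented in $Z$. Hypothesis (i) guarantees $(X,Z)$ is stable, hence $f$ is stable and $X$ is complemented in $Z$. Thus $X$ is complemented in every separable superspace; a standard pushout argument upgrades this to the full extension property (given $S:W\to X$ with $W\subseteq V$ and $V$ separable, form the pushout of $W\hookrightarrow V$ and $S$, which is separable and contains a complemented copy of $X$, then compose the projection with the canonical map $V\to$ pushout to extend $S$). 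Therefore $X$ is separably injective, and Zippin's theorem \cite{zip} gives $X\cong c_0$.

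I expect the last direction to be the main obstacle. The two delicate points there are verifying the cardinality hypothesis of Lemma \ref{L:2.1} (this is exactly where separability and infinite-dimensionality enter) and promoting ``complemented in every separable superspace'' to the extension-form definition of separable injectivity demanded by Zippin's theorem, which the pushout construction supplies. I would also note explicitly that the finite-dimensional case is excluded: finite-dimensional spaces are injective, hence satisfy (i), yet are not isomorphic to $c_0$, which is consistent with (iii) presupposing $\dim X=\infty$.
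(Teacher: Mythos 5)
Your proposal is correct, and on the key implication ii) $\Longrightarrow$ iii) it takes a genuinely different route from the paper. The paper does not invoke Sobczyk's theorem there: instead, using separability of $Y$ to make $(\|T\|B_{Y^*},w^*)$ metrizable, it forms the $w^*$-compact set $K=\{\psi\in\|T\|B_{Y^*}:|\langle\psi,f(x)\rangle|\le 4\eps\|T\|\ \text{for all }x\in X\}$, notes that every $w^*$-cluster point of $(\phi_n)$ lies in $K$ (because $x_n^*=T^*e_n^*$ is $w^*$-null), extracts $\psi_n\in K$ with $\phi_n-\psi_n\to 0$ in the $w^*$-topology, and sets $Uy=\sum_n\langle\phi_n-\psi_n,y\rangle e_n\in c_0$ and $S=T^{-1}U$; the bound $\|\phi_n-\psi_n\|\le 2\|T\|$ yields exactly the constants $(2\alpha,8\alpha)$ that you obtain from $\|P\|\,\|\Phi\|\le 2\|T\|$. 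In effect the paper inlines a proof of precisely the instance of Sobczyk's theorem it needs (its cluster-point argument is essentially Veech's proof of Sobczyk), while you quote the quantitative Sobczyk theorem (norm-$2$ projection onto $c_0$ in any separable superspace) as a black box applied to $V=\overline{\Phi(L(f))+c_0}$; your version is shorter and more conceptual, the paper's is self-contained and needs only the qualitative citation of \cite{sob}. On i) $\Longrightarrow$ ii) the two arguments coincide in substance (Lemma \ref{L:2.1} plus Zippin's theorem): the paper runs it by contradiction and asserts without proof that failure of separable injectivity produces a separable superspace in which $X$ is uncomplemented --- exactly the equivalence between ``complemented in every separable superspace'' and the extension property that your pushout paragraph supplies, so your write-up is if anything more complete at this step. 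Finally, your caveat about finite dimensions is a genuine point: a finite-dimensional $X$ satisfies i) (it is universally left-stable) but not ii), so the theorem must be read with $\dim X=\infty$, with Zippin's theorem quoted for infinite-dimensional separable spaces; the paper leaves this restriction implicit.
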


\begin{proof} i) $\Longrightarrow$ ii).  Suppose that  $X$ is not  isomorphic to $c_0$. Then by Zippin's theorem, $X$ is not separably injective. Therefore, there exists a separable Banach space $Y$, which contains $X$ as an uncomplemented subspace. Clearly, $\text{card}(X)=\text{card}(Y)$. By Lemma \ref{L:2.1} again,
for every $\eps>0$, there is a standard $\eps$-isometry
 $f: X\rightarrow Y$ which is not stable.

ii) $\Longrightarrow$ iii).
 Let $X$ be a Banach space isomorphic to $c_0$ and $T: X\rightarrow c_0$ be an isomorphism.
 Assume that $(e_n)_{n=1}^\infty$ is the canonical basis of $c_0$ with the standard biorthogonal functionals $(e_n^*)_{n=1}^\infty\subset\ell_1.$
 Let $(x_n)\subset X$ satisfy $Tx_n=e_n$ for all $n\in\mathbb N$, and let $T^*:\ell_1\rightarrow X^*$ be the conjugate operator of $T$. Then
$$Tx=\sum (T^*e_n^*)(x)e_n\;{\rm and} \;x=\sum (T^*e_n^*)(x)T^{-1}e_n,\;\;{\rm for\;all}\;x\in X.$$

 Let $\alpha=\|T\|\cdot\|T^{-1}\|$,  $x_n^*= T^*e_n^*\in \|T\|B_{X^*}$ for all $n\in\mathbb N$, and note
 $x_n=T^{-1}e_n \in X$. By Theorem \ref{T:1.3} there exists
$\phi_{n}\in \|T\|B_{Y^*}$ with $\|\phi_{n}\|=\|x_n^*\|$ such that

\begin{align}\label{E:3.3}
|\langle\phi_{n},f(x)\rangle-\langle x_n^*,x\rangle|
\leq4 \eps\|T\|,\; {\rm for \;all}\; x\in X.\end{align}

Since $e^*_n\rightarrow0$ in the $w^*$-topology of $\ell_1=c_0^*$, $x^*_n=T^*e_n^*\rightarrow0$ in the $w^*$-topology of $X^*$. Let
\begin{align}K=\{\psi\in \|T\|B(Y^*):|\langle\psi, f(x)\rangle|\leq 4\eps\|T\|, \; {\rm for \;all}\; x\in X\}.\end{align}
Then $K$ is a nonempty $w^*$-closed compact subset of $Y^*$. Since $Y$ is separable, $(\|T\|B_{Y^*},w^*)$ is metrizable. Let $\rho$ be a metric such that $(\|T\|B_{Y^*},\rho)$ is isomorphic to $(\|T\|B_{Y^*},w^*)$.  Since $(\|T\|B_{Y^*},\rho)$ is a compact metric space and since $K$ is a compact subset of it, $(\phi_n)\subset K$ has at least one $\rho$-sequentially cluster point. Since $(x^*_n)$ is a $w^*$-null sequence in $X^*$, inequality \eqref{E:3.3} entails that any $\rho$-cluster point $\phi$ of $(\phi_n)$ is in $K$ and with $\|\phi\|\leq\|T^*\|=\|T\|$. This further implies that ${\rm dist}_\rho(\phi_n,K)\rightarrow0$. Consequently, there is a sequence $(\psi_n)\subset K$ such that dist$_\rho(\phi_n,\psi_n)\rightarrow0$, or equivalently, $\phi_n-\psi_n\rightarrow0$ in the $w^*$-topology of $Y^*$. Hence, for every $y\in Y$,
\begin{align}Uy\equiv\sum_{n=1}^\infty\langle\phi_n-\psi_n,y\rangle e_n\in c_0\end{align} and with \begin{align}\|Uy\|\leq(\sup_{n\in\mathbb N}\|\phi_n-\psi_n\|)\|y\|\leq2\|T\|\|y\|,\end{align}
 that is, $\|U\|\leq2\|T\|.$

Finally, let \begin{align}S(y)=T^{-1}(Uy)=\sum_{n=1}^\infty\langle \phi_n-\psi_n,y\rangle x_n \;{\rm for\; all}\; y\in Y.\end{align} Then
$$\|S\|=\|T^{-1}U\|\leq2\|T\|\cdot\|T^{-1}\|=2\alpha$$ and

\begin{align}&\|Sf(x)-x\|=\|\sum_{n=1}^\infty \langle\phi_n-\psi_n,f(x)\rangle x_n-\sum_{n=1}^\infty
 \langle x_n^*,x\rangle x_n\|\notag\\
&=\lim_{n\rightarrow\infty}\|\sum_{i=1}^n \langle\phi_i-\psi_i,f(x)\rangle x_i-\sum_{i=1}^n\langle x_i^*,x\rangle x_i\|\notag\\
&=\lim_{n\rightarrow\infty}\|\sum_{i=1}^n (\langle\phi_i,f(x)\rangle-\langle x_i^*,x\rangle) x_i-\sum_{i=1}^n\langle\psi_i,f(x)\rangle x_i\|\notag\\
&\leq\limsup_{n\rightarrow\infty}\|\sum_{i=1}^n (\langle\phi_i,f(x)-\langle x_i^*,x\rangle) x_i\|+\limsup_{n\rightarrow\infty}\|\sum_{i=1}^n\langle\psi_i,f(x)\rangle x_i\|\notag\\
&=\limsup_{n\rightarrow\infty}\|T^{-1}\sum_{i=1}^n (\langle\phi_i,f(x)\rangle-\langle x_i^*,x\rangle) e_i\|+\limsup_{n\rightarrow\infty}\|T^{-1}(\sum_{i=1}^n\langle\psi_i,f(x)\rangle e_i)\|\notag\\
&\leq\|T^{-1}\|\cdot\limsup_{n\rightarrow\infty}(\|\sum_{i=1}^n (\langle\phi_i,f(x)\rangle-\langle x_i^*,x\rangle) e_i\|+\|\sum_{i=1}^n\langle\psi_i,f(x)\rangle e_i)\|)\notag\\
&\leq\|T^{-1}\|(\sup_n |\langle\phi_i,f(x)\rangle-\langle x_i^*,x\rangle| +\sup_n|\langle\psi_i,f(x)\rangle|)\notag\\
&\leq 8 \eps\|T\|\cdot\|T^{-1}\|=8\eps\alpha.\notag\end{align}
Thus, our proof is complete.
\end{proof}

\subsection*{Acknowledgements}
The authors would like to thank the referee for his (her) insightful and helpful suggestions on this paper.


\begin{thebibliography}{HD}

%% Use the widest label as parameter.
%% Reference items can be numbered or have labels of your choice, as below.

%% In IMPAN journals, only the title is italicized; boldface is not used.
%% Our software will add links to many articles; for this, enclosing volume numbers in { } is helpful
%% Do not give the issue number unless the issues are paginated separately.

%%%%%%%%%%% To ease editing, use normal size:

\normalsize
\baselineskip=17pt

%%%%%%%%%%%%%

\bibitem{Alb} F. Albiac and N.J. Kalton, \emph{Topics in Banach Space
Theory}, Graduate Texts in Mathematics 233, Springer, New York, 2006.
\bibitem {bao} L. Bao, L. Cheng, Q. Cheng and D. Dai,  \emph{On universally left-stability of $\eps$-isometry}, Acta Math. Sin., Engl. Ser. {29} (2013), no. 11, 2037--2046.
\bibitem {ben} Y. Benyamini and J. Lindenstrauss, \emph{Geometric Nonlinear Functional Analysis I}, Amer. Math. Soc. Colloquium Publications, Vol.48,
 Amer. Math. Soc., Providence, RI, 2000.


\bibitem {cheng} L. Cheng, Y. Dong and W. Zhang, \emph{On stability of Nonsurjective $\varepsilon$-isometries of Banach spaces}, J. Funct. Anal. {264} (2013), 713--734.
\bibitem {cheng1} L. Cheng and Y. Zhou, \emph{On perturbed metric-preserved mappings and their stability}, J. Funct. Anal., to appear.

\bibitem{day}  M.M. Day, \emph{Normed Linear Spaces}, Berlin, 1958.

\bibitem {dil} S.J. Dilworth, \emph{Approximate isometries on finite-dimensional normed spaces}, Bull. London Math. Soc. {31} (1999), 471--476.

\bibitem {du} Y. Dutrieux and G. Lancien, \emph{Isometric embeddings of compact
spaces into Banach spaces}, J. Funct. Anal. {255} (2008), 494--501.

\bibitem{fa} M. Fabian, P. Habala, P. H¨¢jek, V. Montesinos and V., Zizler, \emph{Banach Space Theory},  CMS Books in Mathematics, 1st Edition, 2011.

\bibitem {figiel} T. Figiel, \emph{On non linear isometric embeddings of normed
linear spaces}, Bull. Acad. Polon. Sci. Math. Astro. Phys. {16} (1968),
185--188.

\bibitem {ge} J. Gevirtz, \emph{Stability of isometries on Banach spaces},
Proc. Amer. Math. Soc. {89} (1983), 633--636.

\bibitem {gode} G. Godefroy and N.J. Kalton, \emph{Lipschitz-free Banach spaces}, Studia Math. {159} (2003), 121--141.
\bibitem {gru} P.M. Gruber, \emph{Stability of isometries}, Trans. Amer. Math. Soc. {245}
(1978), 263--277.

\bibitem{Goo} D.B. Goodner, \emph{Projections in normed linear spaces}, Trans. Amer.
Math. Soc. 69 (1950), 89--108.

\bibitem {hy} D.H. Hyers, S. M. Ulam, \emph{On approximate isometries}, Bull.
Amer. Math. Soc. {51} (1945), 288--292.



\bibitem{lind} J. Lindenstrauss, \emph{On complemented subspaces of m},
Israel J. Math. {5} (1967), 153--156.
\bibitem{lin}
J. Lindenstrauss and L. Tzafriri, \emph{Classical Banach Spaces (I)}, Springer-Verlag, Berlin, Heidelberg, New York, 1977.

\bibitem {ma} S. Mazur and S. Ulam, \emph{Sur les transformations isom\'{e}triques d'espaces vectoriels norm\'{e}s}, C.R. Acad. Sci. Paris {194}
(1932), 946--948.



\bibitem {om} M. Omladi\v{c} and P. \v{S}emrl, \emph{On non linear perturbations of
isometries}, Math. Ann. {303} (1995), 617--628.

\bibitem {qian} S. Qian,  \emph{$\varepsilon$-Isometric embeddings}, Proc. Amer. Math. Soc. {123} (1995) 1797--1803.


\bibitem {sm} P. \v{S}emrl and J. V\"{a}is\"{a}l\"{a}, \emph{Nonsurjective nearisometries of Banach spaces},
J. Funct. Anal. {198} (2003), 268--278.

\bibitem{sob} A. Sobczyk, \emph{Projection of the space (m) on its subspace $c_0$}, Bull.
Amer. Math. Soc. {47} (1941), 938--947.

\bibitem{ta} J. Tabor, \emph{Stability of surjectivity}, J. Approx. Theory {105} (2000) 166--175.



\bibitem{w} J. Wolfe,  \emph{Injective Banach spaces of type $C(T)$}, Israel J. Math. {18} (1974), 133--140.

\bibitem{zip} M. Zippin, \emph{The separable extension problem}, Israel J. Math. {26}
(1977), 372--387.


\end{thebibliography}
\end{document}